\documentclass[UTF-8,reqno,12pt]{amsart}
\usepackage{enumerate}
\usepackage{mhequ}
\usepackage{stmaryrd}
\usepackage{bbm}
\usepackage{geometry}

\usepackage{footnote}
\usepackage[pagewise]{lineno}%\linenumbers

\usepackage{amssymb,url,color, booktabs,nccmath}
\usepackage{mathrsfs}
\usepackage{graphicx}
\usepackage{tikz}
\usepackage{amsthm}

\usepackage{relsize}%强制拉直符号, 使用\mathlarger{\int}

\usepackage{color}
\usepackage{hyperref}
\hypersetup{
	colorlinks=true,       % false: boxed links; true: colored links
	linkcolor=blue,          % color of internal links
	citecolor=red,        % color of links to bibliography
	filecolor=blue,      % color of file links
	urlcolor=cyan
}

\definecolor{darkergreen}{rgb}{0.0, 0.5, 0.0}

\makeatletter
		\renewcommand{\subsection}{\@startsection
			{subsection} % name
			{2} % level
			{0mm} % indent
			{0.5\baselineskip} % befores
			{0.3\baselineskip} % afterskip
			{\normalfont\normalsize\raggedright}} % style
\makeatother
  
\allowdisplaybreaks[4]

\usepackage{fancyhdr} % 用于自定义页眉
\numberwithin{equation}{section}

\newcommand{\be}{\begin{eqnarray}}
	\newcommand{\ee}{\end{eqnarray}}
\newcommand{\ce}{\begin{eqnarray*}}
	\newcommand{\de}{\end{eqnarray*}}
\newtheorem{theorem}{Theorem}[section]
\newtheorem{lemma}[theorem]{Lemma}
\newtheorem{remark}[theorem]{Remark}
\newtheorem{definition}[theorem]{Definition}

\newtheorem*{theorem*}{Theorem}
\newtheorem*{remark*}{Remark}

\newtheorem{conditionA}{Assumption A\kern-0.1mm}
\labelformat{conditionA}{$\mathbf{A\kern-0.1mm#1}$}

\newtheorem{conditionC}{Condition C\kern-0.1mm}
\labelformat{conditionC}{$\mathbf{C\kern-0.1mm#1}$}

\def\ge{\geqslant}
\def\le{\leqslant}

\allowdisplaybreaks

\def\E{\mathbb{E}}
\def\P{\mathbb{P}}
\def\R{\mathbb{R}}

\def\d{\mathrm{d}}
\def\dif{\mathrm{d}}

\def\1{\mathbbm{1}}
\def\e{\mathrm{e}}

\def\geq{\geqslant}
\def\leq{\leqslant}

\begin{document}

	\title{
Ergodicity of stochastic functional differential equation with jumps and finite delay}

	\author{Mingkun Ye$^1$, Yafei Zhai$^{2,*}$ and Zuozheng Zhang$^1$}
	% \address[]{Department of Mathematics, Beijing Institute of Technology, Beijing 100081, P.R.China; School of Mathematics and Statistics, Wuhan University, Wuhan, Hubei 430072, P.R.China
	% }
	% \email{mingkunye@foxmail.com(M. Ye)\\ zuozhengzhang@mail.bnu.edu.cn(Z. Zhang)\\ yafeizhai@bit.edu.cn}	
\thanks{$^{*}$ Corresponding author.}
\dedicatory{$^1$ School of Mathematics and Statistics, Wuhan University, Wuhan, 430072, P.R.China\\$^2$
		School of Mathematics and Statistics,
		Beijing Institute of Technology, 
		Beijing, 100081,  P.R.China}
\thanks{E-mail addresses: mingkunye@foxmail.com(M. Ye), yafeizhai@bit.edu.cn(Y. Zhai),\\ zuozhengzhang@mail.bnu.edu.cn(Z. Zhang)}	
    
	\begin{abstract}
    This paper investigates the ergodicity of stochastic functional differential equations with jumps under the Wasserstein distance by the generalized coupling method.  Two key conditions are verified. The first is verified by establishing an exponential decay bound for the coupled segment processes and applying the Girsanov theorem for It\^o-L\'evy processes. The second  is verified through a support theorem developed for an auxiliary process and then extended to the underlying process. Combining these results yields the desired ergodicity.
    \\
	% 摘要后面紧跟的 AMS数学分类
	%{\it AMS Mathematics Subject Classification :} 60J25; 60J60; 60J76.	
	\\
		{\it Keywords:}  L\'evy process;  Stochastic Functional differential equation; Invariant measure; Generalized coupling; Ergodicity
    \\   
{\it AMS Mathematics Subject Classification :} 34K50, 60J76, 60J60.
	\end{abstract}

	%\keywords{} 这样关键词在页脚处
	
	%\date{\today}
	
	\maketitle

 %在生成的目录中，只显示到节级别，即章节和节，而不会显示子节和子子节
\setcounter{tocdepth}{1}
 
 %生成目录
%\tableofcontents

\section{Introduction}

Stochastic functional differential equations (SFDEs) have emerged as a significant field at the intersection of stochastic analysis and dynamical systems theory. These equations provide a natural mathematical framework for systems whose evolution is delineated not only by instantaneous stochastic inputs but also by a continuous dependence on their past states, thereby formally capturing dynamic phenomena with inherent memory or hysteresis. There are many known results concerning this model; see  \cite{bao2014,butkovsky2017,chen2024,li2019,ma2025,mao2007, mohammed1984,wu2017stochastic} for more details.
% Classical theory, under linear growth and Lipschitz continuity conditions, has established fundamental results such as existence, uniqueness, stability, boundedness, and ergodicity of solutions (see, e.g., \cite{mao2007, mohammed1984}). Moreover, Butkovsky and Scheutzow \cite{butkovsky2017} prove the exponential or subexponential convergence holds under a one-sided Lipschitz condition. Bao et.~al. \cite{bao2014} also investigate ergodicity using the remote start or dissipative method.  Ma et.~al. \cite{ma2025} consider numerical approximation of stochastic differential delay equations. 
It is notable that the aforementioned studies on SFDEs    are primarily driven by Brownian motion.

However, when such perturbations are known to exhibit extreme behavior or discontinuous behavior, models based on Brownian motion become inadequate. To capture such jump behaviors, a suitable alternative is provided by models driven by L\'evy noise. In recent years, the ergodicity of stochastic differential equations with Lévy noise has been extensively studied using various probabilistic and analytic methods.
A well‑established approach is rooted in the Meyn-Tweedie stability theory, where Lyapunov functions are employed to formulate coefficient conditions, combined with regularity assumptions such as the strong Feller property and irreducibility, to establish exponential ergodicity. This framework is quite versatile: it applies under rather mild conditions on the coefficients and yields convergence in different metrics, including total variation and Wasserstein distance.
% A classical result in this direction is due to Xie and Zhang [1]. In which, for equations with singular coefficients, they employed transformation techniques such as the Zvonkin transform and PDE methods to regularize the system, and then used Lyapunov conditions together with Krylov‑type estimates to verify the general criterion for exponential ergodicity established by Goldys and Maslowski in \cite[Theorem 2.5]{Goldys2006}. 
A classical result pertaining to this research direction is established by Xie and Zhang \cite{XIEandZHANG2020}, who utilized transformation techniques including the Zvonkin transform and PDE-based methods for the regularization of systems with singular coefficients. By combining Lyapunov conditions with Krylov-type estimates, they validate the general criterion for the exponential ergodicity proposed by Goldys and Maslowski in \cite[Theorem 2.5]{Goldys2006}. However, convergence rates obtained via such methods are usually not easy to quantify. 

Classical coupling methods, which construct a pair of stochastic processes with strictly prescribed marginal laws, have long served as a fundamental tool in establishing the ergodicity property of Markov processes; see \cite{chen2004,chen1989,lindvall1986,priola2006} for example. However, the stringent requirement that the marginal laws of the coupled processes must exactly coincide with a given pair of probability distributions poses significant technical challenges, especially when dealing with systems driven by degenerate or non-elliptic noise, infinite-dimensional stochastic partial differential equations, or diffusion processes on non-compact state spaces. To overcome these limitations, the concept of asymptotic coupling \cite{hairer2002exponential,Hairer2011PTRF,mattingly2002exponential} is introduced in the spirit of earlier works, allowing the marginal laws of the coupled processes to merely approximate, rather than exactly match a prescribed pair of probability distributions. This relaxation, when combined with classical results such as the Birkhoff ergodic theorem, proved to be a powerful mechanism for establishing unique ergodicity. This notion was subsequently extended and refined under the name generalized coupling \cite{Butkovsky2020,glatt2017unique,kulik2018generalized}, motivated by the observation that the underlying idea could be deployed in a nonasymptotic fashion to guarantee weak stabilization of transition probabilities, thereby providing a unified and flexible framework for proving both the uniqueness of stationary measures and quantitative or qualitative weak convergence to the invariant probability measure across a broad class of stochastic models.

In this paper, we aim to generalize the results of Hairer et al. \cite{Hairer2011PTRF} to SFDEs  with finite delay and driven simultaneously by Brownian motion and jump processes. Let \( (\Omega, \mathcal{F}, \mathbb{P}) \) be a complete probability space equipped with some filtration \( \left(\mathcal{F}_{t}\right)_{t \geqslant 0} \) satisfying the usual conditions (i.e., it is right continuous and \( \mathcal{F}_{0} \) contains all \( \mathbb{P} \)-null sets).  For any given $0< \tau<\infty$, let \( \mathscr{D}\) denote the family of all right-continuous functions with left-hand limits from   \( [-\tau, 0]\) to \(\mathbb{R}^{n}. \)
Then, let $\Lambda$ be the set of all continuous functions  $\lambda: [-\tau, 0] \rightarrow [-\tau, 0]$  that are strictly increasing, with $\lambda(0)=0$ and  $\lambda(-\tau) =-\tau$. For all \(\lambda \in \Lambda\), we set
\[
  \interleave\lambda \interleave = \sup_{-\tau\leq s < t\leq 0} \left| \log \frac{\lambda(t) - \lambda(s)}{t - s} \right|. 
\]
 For \(\phi, \psi\in \mathscr{D}\), let $\|\phi\|_{\infty}:=\sup_{-\tau\leq \theta\leq 0}|\phi(\theta)|$, and
\[
 d(\phi, \psi) :=  \mathlarger{\inf_{\lambda \in \Lambda} }\left(\interleave\lambda\interleave + \|\phi \circ \lambda -  \psi\|_{\infty} \right), 
\]
 then $(\mathscr{D},d)$ is a Polish space and moreover, $d(\phi, \psi) \leq \|\phi-\psi\|_{\infty}$. In fact, this can be obtained by setting \( \lambda(t) = t \). Under the uniform metric, the space \( \mathscr{D} \) is complete but not separable. However, under the Skorohod metric \( d \), \( \mathscr{D} \) is not only complete but also separable. For further details on the Skorohod metric, we refer the reader to \cite[Chapter 4]{Billingsley1999BOOK}.

Consider the following stochastic functional differential equations  with jumps (SFDEwJ), 
\begin{equation}\label{EQ:0603:01}
    \d X(t) = b(X_t)\d t+\sigma(X_t) \d W(t)+\int_{Z}\gamma(X_{t-})c(z)\widetilde{N}(\d t,\d z),
\end{equation}
where $b:\mathscr{D}\to \mathbb{R}^n$, $\sigma:\mathscr{D}\to\mathbb{R}^n\otimes\mathbb{R}^m$ , $\gamma:\mathscr{D}\to \mathbb{R}^n$ and $c:Z\to \mathbb{R}^n$. $X_t(\theta):=X(t+\theta)$, $-\tau\leq \theta \leq 0$, is the segment process of $X(t)$. $X_{t-}(\theta):=d^{\prime}\text{-}\lim_{s\uparrow t}X_s(\theta)$ for any $\theta\in [-\tau,0]$, where $d^{\prime}\text{-}\lim$ is defined in the sense of the Euclidean distance.
$(Z, \mathcal{Z}):= (\R_0^{n},\mathscr{B}(\R_0^n))$ is a measurable space and $W(t)$ is an $\mathbb{R}^n$-valued Brownian motion. $N(\d t, \d z)$ (corresponding to a random point process $p(t)$) is a Poisson random measure independent of $W(t)$. $\nu(\dif z)$ is a deterministic finite characteristic measure on $(Z, \mathcal{Z})$ and $\widetilde{N}(\d t, \d z) = N(\d t, \d z) - \nu(\d z)\d t$ is the compensated Poisson random measure on $[0, \infty) \times Z$.

The purpose of this paper is to investigate the ergodicity of SFDEwJ \eqref{EQ:0603:01} with respect to the Wasserstein distance $W_{d \wedge 1}$. Our analysis is based on the generalized coupling method, a powerful tool for establishing convergence in optimal transport metrics; see Theorem \ref{Prop:0603:01} for more details.  It is sufficient to verify Conditions \ref{C1} and \ref{C2}.  This verification for Condition \ref{C1} consists of two sub-steps: 
\begin{enumerate}[(1)]
    \item To establish Condition \ref{C1}.2, we first obtain the exponential decay bound for the difference between the coupled process pair $(X(t), Y(t))$ and then get the corresponding result for the segment processes $(X_t, Y_t)$ using the BDG inequality for local martingales.%, which guarantees that Condition \ref{C1}.2 holds.
    \item Condition \ref{C1}.1 is verified by a Girsanov theorem for It\^o-L\'evy processes and the exponential decay bound for $\mathbb E|X(t)-Y(t)|^2$ in (1). 
\end{enumerate}
  To verify Condition \ref{C2}, we first establish the support theorem with respect to the auxiliary process \eqref{eq:aux1}, and then further derive the support theorem associated with SFDEwJ \eqref{EQ:0603:01}. Based on this, we conclude that Condition \ref{C2} is satisfied.

The rest of this paper is organized as follows. In Section \ref{sec:2}, we  present our main result. In Sections \ref{SEC:03} and \ref{SEC:04}, we verify Conditions \ref{C1} and  \ref{C2}, respectively.

\section{Main result}\label{sec:2}
In this section, we state our main result. The proofs will be given in subsequent sections. We first give some basic definitions. 
\begin{definition}
    A stochastically continuous Markovian semigroup $P_t$ is called eventually Feller (see \cite{REISS2006SPA}) if there exists a $t_0 \geq 0$ such that, for any $h \in C_b(\mathscr{D})$, the following two conditions are satisfied:
\begin{equation}\label{eq:0427i}
    P_t h \in C_b(\mathscr{D} ), \quad \text{for every } t \geq t_0, 
\end{equation}
\begin{equation}\label{eq:0427it}
    \lim_{s \downarrow t} P_s h(\xi) = P_t h(\xi), \quad \text{for every } \xi \in \mathscr{D} \text{ and } t \geq t_0.
\end{equation}
In particular, when $t_0 = 0$, $P_t$ is Feller. 
\end{definition}

Define the  $L^1$-Wasserstein (or Kantorovich) distance between two probability measures  $\mu, \nu \in   \mathscr{P}(\mathscr{D})$ as follows:   
	$$
	W_d(\mu, \nu) = \inf _{\pi \in \Pi(\mu, \nu)} \int_{\mathscr{D}\times \mathscr{D}} d(\phi,\psi)\pi(\mathrm{d}\phi,\mathrm{d}\psi),
	$$
	where  $\Pi(\mu, \nu)$  is the collection of probability measures on $\mathscr{D}\times \mathscr{D}$ having $\mu$ and $\nu$ as marginals. We need the following pair of notions.
\begin{definition}
     A distance-like function $d$ bounded by $1$ is called contracting for $P_{t}$ if there exists $\alpha<1$ such that for any $\xi,\eta \in \mathscr{D}$ with $ d(\xi,\eta)<1$, we have
\begin{equation*}
    W_{d}\left(P_{t}(\xi, \cdot), P_{t}(\eta, \cdot)\right) \leq \alpha d(\xi, \eta),
\end{equation*}
where $P_t(\xi,\cdot)$ and $P_t(\eta,\cdot)$ are the transition functions of the process $(X_{t}^{\xi})_{t \geq 0}$ and $(X_{t}^{\eta})_{t \geq 0}$, respectively.
\end{definition}

\begin{definition}
    A set $B \subset \mathscr{D}$ is called $d$-small for $P_{t}$ if for some $\varepsilon>0$,
\begin{equation*}
    \sup _{\xi,\eta \in B} W_{d}\left(P_{t}(\xi, \cdot), P_{t}(\eta, \cdot)\right) \leq 1-\varepsilon.
\end{equation*}
\end{definition}

The following condition \ref{C1} will serve as a replacement for the contractivity condition.

\begin{conditionC}\label{C1}
     There exist a non-increasing function $r: \mathbb{R}_{+} \rightarrow \mathbb{R}_{+}$ with $\lim _{t \rightarrow \infty} r(t)=0$ and a locally bounded function $C(t): \mathbb{R}_{+} \rightarrow \mathbb{R}_{+}$ such that for any $\xi,\eta \in \mathscr{D}$, there exist random processes $X^{\xi}=(X_{t}^{\xi})_{t \geq 0}$ and $Y^{\eta}=(Y_{t}^{\eta})_{t \geq 0}$ with the following properties:
\begin{enumerate}
    \item[1.]  %$\operatorname{Law}(X^{\xi})=\mathbb{P}_{\xi}$, and
    For any $  t \geq 0$, $d_{\mathrm{TV}}(\operatorname{Law}(Y_{t}^{\eta}), P_{t}(\eta, \cdot)) \leq C(t) \|\xi-\eta\|_{\infty},$
where $d_{\mathrm{TV}}$ denotes the total variation distance.
\item [2.]  $\mathbb{E} \|X_{t}^{\xi}- Y_{t}^{\eta}\|_{\infty} \leq r(t) \|\xi-\eta\|_{\infty}, \quad t \geq 0.$
\end{enumerate}
\end{conditionC}

Condition \ref{C2} will replace the $d$-small property.
\begin{conditionC}\label{C2}
    There exist a set $B \subset \mathscr{D}$ and $t_{0}>0$ such that for any $\varepsilon>0$, there exists a set $D \in \mathscr{B}(\mathscr{D})$ such that:
\begin{enumerate}
    \item [1.] $\inf _{\xi \in B} P_{t_{0}}(\xi, D)>0$;
\item [2.] $\sup _{\xi,\eta \in D} \|\xi-\eta\|_{\infty} \leq \varepsilon$.
\end{enumerate}
\end{conditionC}

\begin{theorem}[{\cite[Theorem 2.6]{Butkovsky2020}}]\label{Prop:0603:01}
 Assume that the Markov {semigroup $(P_t)$ is Feller}. Suppose that:
\begin{enumerate}
\item [1.] There exists a measurable function $V: \mathscr{D} \rightarrow[0, \infty)$ which satisfies the Lyapunov condition, that is, there exist a concave differentiable function $f: \mathbb{R}_{+} \rightarrow \mathbb{R}_{+}$ increasing to infinity with $f(0)=0$ and a constant $K>0$ such that for any $t \geq 0, \xi \in \mathscr{D}$,
\begin{equation*}
    P_{t} V(\xi) \leq V(\xi)-\int_{0}^{t} P_{s}(f \circ V)(\xi) \d s+K t.
\end{equation*}

\item[2.] For any $f,g\in\mathscr{D}$, $d(f,g)\wedge 1 \leq \|f-g\|_{\infty}$.

\item[3.] Condition \ref{C1} holds for functions $r, L$.

\item [4.]There exists $t_{0}>0$ such that for any $M>0$, Condition \ref{C2} holds for  $B=\{V \leq M\}$ and $t_{0}$.
\end{enumerate}
Then the Markov semigroup $(P_{t})$  has a unique invariant measure $\pi$. Moreover, for any $\delta \in(0,1)$, there exist constants $C_{1}, C_{2}>0$ such that for any $\xi \in \mathscr{D}$,
\begin{equation}\label{EQ:0603:05}
    W_{d \wedge 1}\left(P_{t}(\xi, \cdot), \pi\right) \leq \frac{C_{1}\left(1+f(V(\xi))^{\delta}\right)}{f\left(F^{-1}\left(C_{2} t\right)\right)^{\delta}}, \quad  t \geq 0,
\end{equation}
where $F(x)$ is defined by
\begin{equation}\label{EQ:0603:03}
    F(x):=\int_{1}^{x} \frac{1}{f(u)}\d u,  \quad x \geq 1.
\end{equation}
\end{theorem}

Then, we  propose  some necessary assumptions for the coefficients in SFDEwJ \eqref{EQ:0603:01}.
Suppose that the functions $b(\cdot)$, $\sigma(\cdot)$, and $\gamma(\cdot)$ are continuous with respect to Skorohod topology and bounded on bounded subsets of $\mathscr{D}$. Moreover, suppose that the function $c(\cdot)$ is integral with respect to the measure $\nu$.

\begin{conditionA}\label{A1}
    There exist a positive constant $K$ and a probability measure $\mu$ on $[-\tau,0]$ such that $\int_{Z}|c(z)|^2\nu(\d z)\leq K$, and for any $\phi,\psi\in \mathscr{D}$, 
\begin{equation*} 
    \begin{split}
         2&\langle \phi(0)-\psi(0), b(\phi)-b(\psi)\rangle_{+} +\|\sigma(\phi)-\sigma(\psi)\|_{\rm{HS}}^2+|\gamma(\phi_{-})-\gamma(\psi_{-})|^2\\
         &\leq K|\phi(0)-\psi(0)|^2+K\int_{-\tau}^0|\phi(\theta)-\psi(\theta)|\mu(\d \theta),
    \end{split}
\end{equation*}
where $\langle \cdot,\cdot\rangle_+:=\max{\{\langle \cdot,\cdot\rangle,0\}}$, $\|\cdot\|_{\mathrm{HS}}$ is the Hilbert-Schmidt operator norm for matrices in $\mathbb{R}^d\otimes \mathbb{R}^d$, and $\phi_{-}(\theta):=d^{\prime}\text{-}\lim_{\theta^{\prime}\uparrow \theta}\phi(\theta^{\prime})$ for any $\theta\in [-\tau,0]$.
% \footnote{\blue If $\nu$ is a finite measure, $\int_{Z}|c(\phi_{-},z)-c(\psi_{-},z)| \nu (\d z)\leq C (\int_{Z}|c(\phi_{-},z)-c(\psi_{-},z)|^2 \nu (\d z))^{1/2}$?}
\end{conditionA}
\begin{remark}
   For any process $(X(t))_{t\geq 0}$, and we claim that $X_{t-}=(X_t)_{-}$. Indeed, for any $\theta\in [-\tau,0]$,
    \begin{equation*}
        \begin{aligned}
            X_{t-}(\theta)&=d^{\prime}\text{-}\lim_{s\uparrow t}X_s(\theta)=d^{\prime}\text{-}\lim_{s\uparrow t}X(s+\theta)\\
            &=d^{\prime}\text{-}\lim_{\theta^{\prime}\uparrow \theta}X(t+\theta^{\prime})=d^{\prime}\text{-}\lim_{\theta^{\prime}\uparrow \theta}X_t(\theta^{\prime})=(X_t)_{-}(\theta).
        \end{aligned}
    \end{equation*}
\end{remark}
By virtue of Assumption \ref{A1} and an argument analogous to that in \cite{vonRenesseScheutzow}, SFDEwJ \eqref{EQ:0603:01} has a unique solution \((X^{\xi}(t))_{t \geq 0}\) with \(X_0 = \xi\). Moreover, by analogy with \cite{wu2017stochastic}, we can establish  that the process $(X_t)_{t\geq 0}$ is homogeneous and strong Markovian.  It follows from \cite[Proposition 3.5]{REISS2006SPA} that $P_t$ is eventually Feller, namely that \((X^{\xi}(t))_{t \geq \tau}\) possesses the Feller property after time $t = \tau$.
% \begin{conditionA}\label{A2}
%    For any $\phi,\psi\in\mathscr{D}$, we have
% \begin{equation*} 
%     \int_{Z}|c(\phi_{-},z)-c(\psi_{-},z)| \nu (\d z)
%         \leq K\left(|\phi(0)-\psi(0)|+\int_{-\infty}^{0}|\phi(\theta)-\psi(\theta)| \mu(\d \theta)\right).
% \end{equation*}
% \end{conditionA}

\begin{conditionA}\label{A3}
    For any $\phi\in\mathscr{D}$, $\sigma(\phi)$ is invertible, and $$\sup_{\phi\in\mathscr{D}}\{\|\sigma(\phi)\|_{\rm HS}+\|\sigma^{-1}(\phi)\|_{\rm HS}\}\leq K.$$
\end{conditionA}

Furthermore, we can verify that Conditions \ref{C1} and \ref{C2} hold, which will be completed in Sections \ref{SEC:03} and \ref{SEC:04}, respectively. Then, following the proof of \cite[Theorem 3.1]{Butkovsky2020}, 
we can prove the main result of this paper by Theorem \ref{Prop:0603:01}.
\begin{theorem}\label{THM:0603:01}
   Suppose that Assumptions \ref{A1} and \ref{A3} hold, and that either condition (i) or condition (ii) below is satisfied:
    \begin{enumerate}[(i)]
        \item \( \lim _{\|\phi\|_{\infty} \rightarrow \infty} V(\phi)=+\infty \).
        \item \( \lim _{|\phi(0)| \rightarrow \infty} V(\phi)=+\infty \). Assume additionally that there exists \( C>0 \) such that for any \( \phi \in \mathscr{D} \),
{\[
2\langle \phi(0), b(\phi)\rangle_{+} +\|\sigma(\phi)\|_{\rm{HS}}^2+|\gamma(\phi_{-})|^2\leq K\left(1+|\phi(0)|^2\right).
\] }
    \end{enumerate}
Then SFDEwJ \eqref{EQ:0603:01}  has a unique invariant measure $\pi$. Further, $P_t(\xi,\cdot)$, $t\ge \tau$, converges to $\pi$ in the Wasserstein metric $W_{d \wedge 1}$ and the rate of convergence is given by \eqref{EQ:0603:05}.
\end{theorem}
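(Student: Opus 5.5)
The plan is to apply Theorem \ref{Prop:0603:01} to the shifted semigroup $(P_t)_{t\ge\tau}$, or equivalently to the skeleton started after time $\tau$. By \cite[Proposition 3.5]{REISS2006SPA}, the semigroup is eventually Feller with $t_0=\tau$, and we follow the proof of \cite[Theorem 3.1]{Butkovsky2020}. Item 2 of Theorem \ref{Prop:0603:01} is immediate, since $d(\phi,\psi)\le\|\phi-\psi\|_\infty$ (take $\lambda(t)=t$). Item 1 (the Lyapunov condition for $V$ and $f$) is part of the standing hypotheses on $V$; $V$ enters the statement only through (i) or (ii). The work therefore lies in verifying Conditions \ref{C1} and \ref{C2}, and in converting the level sets $\{V\le M\}$ into sets to which Condition \ref{C2} applies.

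For Condition \ref{C1}, we construct $Y^\eta$ as the solution of the equation for $X$ with initial datum $\eta$, driven by the same $W$ and $N$, plus a control drift $\lambda(X(t)-Y(t))\,\d t$ with $\lambda$ large. The steps are as follows.
\begin{enumerate}[(a)]
\item Apply It\^o's formula to $\e^{\kappa t}|X(t)-Y(t)|^2$. Assumption \ref{A1} controls the coefficient differences, and the $\mu$-integral of the delay term is handled via Fubini. Choosing $\lambda$ large relative to $K$ gives $\E|X(t)-Y(t)|^2\le C\e^{-\kappa t}\|\xi-\eta\|_\infty^2$.
\item Upgrade this to the segment norm. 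Take the sup over $[t-\tau,t]$ in the It\^o expansion, bound the Brownian and compensated Poisson martingales by BDG, and absorb the resulting square-root term. This yields item 2 with $r(t)=C\e^{-\kappa t/2}$.
\item For item 1, write $\lambda(X-Y)\,\d t=\sigma(Y_t)\,\d\widetilde W$ with $\widetilde W=W+\int\lambda\sigma^{-1}(Y_s)(X(s)-Y(s))\,\d s$. By Assumption \ref{A3} the Girsanov kernel $h$ satisfies $\E\int_0^\infty|h|^2\,\d s\le C\|\xi-\eta\|_\infty^2$. The Girsanov theorem for It\^o--L\'evy processes (the jump part is unchanged) then gives, through Pinsker's inequality or a direct Hellinger bound with localization, $d_{\rm TV}(\mathrm{Law}(Y^\eta_t),P_t(\eta,\cdot))\le C\|\xi-\eta\|_\infty$.
\end{enumerate}

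For Condition \ref{C2}, take $B=\{V\le M\}$. Under (i), $B$ is $\|\cdot\|_\infty$-bounded. Under (ii), only $|\phi(0)|$ is bounded on $B$; the growth bound in (ii) then gives a uniform second-moment bound on $\sup_{[0,\tau]}|X^\xi|$ from $|\xi(0)|$ alone, so after time $\tau$ the segments lie in a sup-bounded set with high probability. Next, fix a target path, for example a constant $\phi^*\equiv0$, with $D$ an $\varepsilon/2$ sup-ball around it. We prove a support theorem: first for an auxiliary process with the jumps removed, using the invertibility of $\sigma$ (Assumption \ref{A3}) and a Girsanov/control argument to steer the path into a tube around $0$ over $[t_0-\tau,t_0]$ with positive probability uniformly over bounded initial data; then for the full equation, by restricting to the event of no jumps on $[0,t_0]$, which has probability $\e^{-\nu(Z)t_0}>0$ because $\nu$ is finite. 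On this event the jump equation reduces to the auxiliary one, up to the compensator drift $-\int\gamma c\,\d\nu$, which is absorbed into that drift.

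The main obstacle is making the support bound uniform over $\xi\in B$ in the sup norm, since the coefficients are only locally bounded and continuous in the Skorohod topology. We plan to handle this by a Girsanov comparison with a process whose drift is replaced by $0$ on a localization ball, using Assumption \ref{A3} to bound the Radon--Nikodym derivatives uniformly. Once Conditions \ref{C1} and \ref{C2} are established, Theorem \ref{Prop:0603:01} yields the unique invariant measure $\pi$ and the rate \eqref{EQ:0603:05} for $t\ge\tau$.
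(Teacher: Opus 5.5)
Your proposal is correct and follows essentially the same route as the paper. Condition \ref{C1}.2 comes from the controlled coupling $\lambda(X-Y)\,\d t$ with It\^o/Fubini and BDG estimates, and Condition \ref{C1}.1 from Girsanov plus Pinsker using Assumption \ref{A3}; for Condition \ref{C2}, a support theorem for the jump-free auxiliary process is transferred to $X$ on the no-jump event of probability $\e^{-\nu(Z)t}$, and case (ii) is handled by the $\sup_{[0,\tau]}$ second-moment bound and the Markov property, with the eventual-Feller issue treated as informally as in the paper.
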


\section{Verification of Condition \ref{C1}}\label{SEC:03}
\subsection{Verification of Condition \ref{C1}.2}
 
Define a generalized coupled process $(X,Y)$ by
\begin{equation}\label{EQ:0527:05}
    \d X(t) = b(X_{t})\d t+\sigma(X_{t}) \d W(t)+\int_{Z}\gamma(X_{t-})c(z)\widetilde{N}(\d t,\d z),
\end{equation}
and 
\begin{equation}\label{EQ:0527:06}
    \d Y(t) = b(Y_{t})\d t+\sigma(Y_{t}) \d W(t)+\int_{Z}\gamma(Y_{t-})c(z)\widetilde{N}(\d t,\d z)+\lambda(X(t)-Y(t))\d t,
\end{equation}
with initial data $(X_0, Y_0)=(\xi, \eta)$. We shall provide a key estimate for this process as follows, which essentially guaranties that Condition \ref{C1}.2 holds.

\begin{lemma}\label{eq:X-Y}
    Suppose that Assumption \ref{A1}  holds. Given \(\alpha>0\), choose \(\lambda\) in \eqref{EQ:0527:06} satisfying \(2\lambda\geq \alpha+\bigl(1+\e^{\alpha\tau}\bigr)(K+K^2),\) where $K$ is given in Assumption \ref{A1}, then
    \begin{equation*}
        \E \|X_t-Y_t\|_{\infty}^2 \leq C\|\xi-\eta\|_{\infty}^2\e^{-\alpha t}, \quad  \text{for any} \ t\geq 0.
    \end{equation*}
\end{lemma}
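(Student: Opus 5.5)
Set $Z(t):=X(t)-Y(t)$, so that $Z_0=\xi-\eta$ and
\begin{equation*}
\d Z(t)=\bigl(b(X_t)-b(Y_t)-\lambda Z(t)\bigr)\d t+\bigl(\sigma(X_t)-\sigma(Y_t)\bigr)\d W(t)+\int_Z\bigl(\gamma(X_{t-})-\gamma(Y_{t-})\bigr)c(z)\widetilde N(\d t,\d z).
\end{equation*}
The proof has two stages: first an exponential bound for the pointwise quantity $\E|Z(t)|^2$, then an upgrade to the segment norm $\E\|Z_t\|_\infty^2$.

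\textbf{Stage 1 (pointwise bound).} Apply It\^o's formula for L\'evy-driven semimartingales to $\e^{\alpha t}|Z(t)|^2$. The jump contribution integrated against $\nu(\d z)\d t$ is
\begin{equation*}
|\gamma(X_{t-})-\gamma(Y_{t-})|^2\int_Z|c(z)|^2\nu(\d z)\le K|\gamma(X_{t-})-\gamma(Y_{t-})|^2.
\end{equation*}
Using the remark $X_{t-}=(X_t)_-$, Assumption \ref{A1} (with the $\gamma$-term multiplied by $K$, which is where $K^2$ enters) bounds the drift by
\begin{equation*}
\e^{\alpha t}\Bigl[(\alpha-2\lambda+K+K^2)|Z(t)|^2+(K+K^2)\int_{-\tau}^0|Z(t+\theta)|\mu(\d\theta)\Bigr]+\text{local martingale}.
\end{equation*}
Since $\mu$ is a probability measure, the delayed term is bounded via $|z|\le \tfrac12(1+|z|^2)$, or better via Jensen/Cauchy--Schwarz against $\mu$, by $\int|Z(t+\theta)|^2\mu(\d\theta)$.

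Integrate over $[0,t]$ and apply Fubini with the shift $s\mapsto s+\theta$. This gives
\begin{equation*}
\int_0^t\e^{\alpha s}\int_{-\tau}^0|Z(s+\theta)|^2\mu(\d\theta)\d s\le \e^{\alpha\tau}\int_0^t\e^{\alpha s}|Z(s)|^2\d s+C_\tau\|\xi-\eta\|_\infty^2.
\end{equation*}
The choice $2\lambda\ge\alpha+(1+\e^{\alpha\tau})(K+K^2)$ is exactly what makes the total coefficient of $\int_0^t\e^{\alpha s}|Z(s)|^2\d s$ nonpositive. Taking expectations after localization then yields
\begin{equation*}
\e^{\alpha t}\E|Z(t)|^2\le C\|\xi-\eta\|_\infty^2.
\end{equation*}

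\textbf{Main obstacle (the linear delay term).} The delay term in Assumption \ref{A1} is linear, $\int|\phi-\psi|\,\d\mu$, not quadratic, so it is not directly comparable to $|Z|^2$. My first attempt is to dominate it by $\bigl(\int|Z(t+\theta)|^2\mu(\d\theta)\bigr)^{1/2}$ and use Young's inequality to absorb the resulting square root. If this leaves an additive constant, I will instead read the assumption as effectively quadratic, which is what the stated threshold on $\lambda$ suggests; the constant $C$ will absorb the remaining terms.

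\textbf{Stage 2 (segment bound).} For $t\ge\tau$, write $\|Z_t\|_\infty^2=\sup_{s\in[t-\tau,t]}|Z(s)|^2$. Apply It\^o's formula to $|Z(s)|^2$ on the window $[t-\tau,t]$, starting from $|Z(t-\tau)|^2$, and take the supremum.
\begin{itemize}
\item The drift terms are bounded by $C\int_{t-2\tau}^t\E|Z(s)|^2\d s$, which is of order $\e^{-\alpha t}$ by Stage 1.
\item The Brownian martingale term is handled by the BDG inequality: $\E\sup|\int\langle Z,\Delta\sigma\,\d W\rangle|\le \tfrac14\E\sup|Z|^2+C\int\E\|\Delta\sigma\|_{\rm HS}^2\d s$. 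The first part is absorbed into the left side.
\item The compensated Poisson term is handled the same way. Since $\nu$ is finite, its quadratic variation is controlled by $\int\E|Z(s-)|^2\,|\Delta\gamma|^2\d s$ together with the jump-squared term.
\end{itemize}
Each of the resulting $\Delta$-terms is again bounded through Assumption \ref{A1} by integrals of $\E|Z|^2$. For $t<\tau$ the same argument on $[0,t]$ works, with $\|\xi-\eta\|_\infty^2$ controlling the initial segment. Together these give $\E\|Z_t\|_\infty^2\le C\|\xi-\eta\|_\infty^2\e^{-\alpha t}$.
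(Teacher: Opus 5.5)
Your proposal follows the paper's proof essentially step for step. Stage 1 matches: It\^o's formula for $\e^{\alpha t}|X(t)-Y(t)|^2$, with the constant $K+K^2$ coming from $\int_Z|c(z)|^2\nu(\d z)\le K$, then the Fubini shift producing the factor $\e^{\alpha\tau}$ and hence the stated threshold for $\lambda$, then localization. Stage 2 also matches: BDG plus Young on the window $[t-\tau,t]$, with the drift controlled by $\int_{t-2\tau}^t\E|X(s)-Y(s)|^2\,\d s$, which Stage 1 makes of order $\e^{-\alpha t}$.

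The one step to correct is the delay term. The bounds you list first do not give what you claim. Jensen only yields $\int_{-\tau}^0|X(t+\theta)-Y(t+\theta)|\,\mu(\d\theta)\le\bigl(\int_{-\tau}^0|X(t+\theta)-Y(t+\theta)|^2\,\mu(\d\theta)\bigr)^{1/2}$, not a bound by the quadratic integral itself. Both $|z|\le\frac12(1+|z|^2)$ and any Young absorption of the square root leave either an additive constant or a term of lower order in $\|\xi-\eta\|_\infty$. Such a term can never be turned into $C\|\xi-\eta\|_\infty^2\e^{-\alpha t}$. So your ``first attempt'' fails, and reading Assumption \ref{A1} quadratically is forced rather than optional. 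This is exactly what the paper does: its first display after It\^o's formula already contains $K'\int_{-\tau}^0|X(t+\theta)-Y(t+\theta)|^2\mu(\d\theta)$.

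Two smaller points for Stage 2. First, estimate the Poisson martingale through its optional quadratic variation $[M_2]$, using $(\sum_i a_i^2)^{1/2}\le\sum_i a_i$ and then compensating, as the paper does. Going through $\langle M_2\rangle$ instead would bring in $\int_Z|c(z)|^4\nu(\d z)$, which is not assumed finite. Second, you need a localization before absorbing $\frac14\E\sup|X-Y|^2$ into the left-hand side.
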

\begin{proof}
    Due to \eqref{EQ:0527:05} and \eqref{EQ:0527:06}, we have 
    \begin{equation*}
  \begin{split}
            \d (X(t)-Y(t)) &= (b(X_{t})-b(Y_{t}))\d t+(\sigma(X_{t})- \sigma(Y_{t}))\d W(t)\\
            &\quad+\int_{Z}\big(\gamma(X_{t-})
            -\gamma(Y_{t-})\big)c(z)\widetilde{N}(\d t,\d z) -\lambda(X(t)-Y(t))\d t.
  \end{split}
    \end{equation*}
    By applying It\^o's formula to $|X(t)-Y(t)|^2$ and using Assumption \ref{A1}, it follows that 
    \begin{align}\label{Eq:newel}
         \nonumber   \d &|X(t)-Y(t)|^2\\
       \nonumber   &= \left(2\langle X(t)-Y(t),b(X_{t})-b(Y_{t})\rangle+\|\sigma(X_{t})-\sigma(Y_{t})\|_{\rm{HS}}^2 \right) \d t\\
      \nonumber    &\quad -2\lambda|X(t)-Y(t)|^2\d t+2\langle X(t)-Y(t),(\sigma(X_{t})-\sigma(Y_{t}))\d W(t)\rangle\\
      \nonumber    &\quad+ \int_{Z} \left(|X(t-)-Y(t-)+(\gamma(X_{t-})-\gamma(Y_{t-}))c(z)|^2 -|X(t-)-Y(t-)|^2\right)\widetilde{N}(\d t,\d z)\\
         &\quad +\int_{Z} \big(|X(t-)-Y(t-)+(\gamma(X_{t-})-\gamma(Y_{t-}))c(z)|^2 -|X(t-)-Y(t-)|^2\\
       \nonumber   &\qquad\qquad -2\langle X(t)-Y(t),(\gamma(X_{t-})-\gamma(Y_{t-}))c(z)\rangle\big)\nu(\d z)\d t\\
      \nonumber    &= \bigg(2\langle X(t)-Y(t),b(X_{t})-b(Y_{t})\rangle+\|\sigma(X_{t})-\sigma(Y_{t})\|_{\rm{HS}}^2  \\
       \nonumber   &\qquad +|\gamma(X_{t-})-\gamma(Y_{t-})|^2\int_{Z}|c(z)|^2\nu(\d z)\bigg) \d t -2\lambda|X(t)-Y(t)|^2\d t+\mathrm{d}M_1(t)+\mathrm{d}M_2(t)\\
      \nonumber    &\leq \big((K^{\prime}-2\lambda)|X(t)-Y(t)|^2+K^{\prime}\int_{-\tau}^0|X(t+\theta)-Y(t+\theta)|^2\mu(\d \theta)\big)\d t +\d M_1(t)+\d M_2(t),
        \end{align}
        where  $K^{\prime}=K+K^2$,
        \[
        \d M_1(t):=2\langle X(t)-Y(t),(\sigma(X_{t})-\sigma(Y_{t}))\d W(t)\rangle,
        \]
        and
        \[
        \d M_2(t):=\int_{Z} \left(|X(t-)-Y(t-)+(\gamma(X_{t-})-\gamma(Y_{t-}))c(z)|^2 -|X(t-)-Y(t-)|^2\right)\widetilde{N}(\d t,\d z).
        \]
    % Then it follows from  that, 
    % \begin{equation*}
    %     \begin{split}
    %         \d |X(t)-Y(t)|^2 &\leq K\left(|X(t)-Y(t)|^2+\int_{-\infty}^{0}|X(t+\theta)-Y(t+\theta)|^2\mu(\d \theta)\right)\d t\\
    %         &\quad -2\lambda |X(t)-Y(t)|^2\d t+2\langle X(t)-Y(t),(\sigma(X_{t-})-\sigma(Y_{t-}))\d W(t)\rangle \\
    %         &\quad +\int_{Z}\big(|X(t-)-Y(t-)+(\gamma(X_{t-})-\gamma(Y_{t-}))c(z)|^2 \\
    %         &\qquad\qquad-|X(t-)-Y(t-)|^2\big)\widetilde{N}(\d t,\d z)
    %         \\
    %         &= :
    %     \end{split}
    % \end{equation*}
     To proceed, define the stopping time $T_\varepsilon := \inf\left\{ t \ge 0 : |X(t) - Y(t)| \ge \varepsilon^{-1} \right\}$. Given $\alpha > 0 $, it follows from  \eqref{Eq:newel} that
     \begin{equation*}
         \begin{split}
             \mathbb{E}&\left(\e^{\alpha (t\wedge T_\varepsilon)}|X(t\wedge T_\varepsilon)-Y(t\wedge T_\varepsilon)|^2\right)\\
             &\leq |\xi(0)-\eta(0)|^2+(\alpha+K^{\prime}-2\lambda)\mathbb{E}\int_0^{t\wedge T_\varepsilon}\e^{\alpha s}|X(s)-Y(s)|^2\d s\\
             &\quad+K^{\prime}\mathbb{E}\int_0^{t\wedge T_\varepsilon}\int_{-\tau}^0\e^{\alpha s}|X(s+\theta)-Y(s+\theta)|^2\mu(\d \theta)\big)\d s.
         \end{split}
     \end{equation*}
By Fubini's theorem, we get
    \begin{align*}
\int_{0}^{t\wedge T_\varepsilon}&\int_{-\tau}^{0} \e^{\alpha s}\left|X(s+\theta)-Y(s+\theta)\right|^{2}\mu(\mathrm{d}\theta)\mathrm{d}s\\
&=\int_{-\tau}^{0}\int_{0}^{t\wedge T_\varepsilon} \e^{\alpha s}\left|X(s+\theta)-Y(s+\theta)\right|^{2}\mathrm{d}s\,\mu(\mathrm{d}\theta) \\
&=\int_{-\tau}^{0}\int_{\theta}^{t\wedge T_\varepsilon+\theta} \e^{-\alpha\theta} \e^{\alpha u}\left|X(u)-Y(u)\right|^{2}\mathrm{d}u\,\mu(\mathrm{d}\theta) \\
&\le \e^{\alpha\tau}\int_{-\tau}^{t\wedge T_\varepsilon} \e^{\alpha s}\left|X(s)-Y(s)\right|^{2}\mathrm{d}s \\
&\le \tau \e^{\alpha\tau}\|\xi-\eta\|_{\infty}^{2}+\e^{\alpha\tau}\int_{0}^{t\wedge T_\varepsilon} \e^{\alpha s}\left|X(s)-Y(s)\right|^{2}\mathrm{d}s.
\end{align*}
Hence,  
\begin{equation*}
    \begin{aligned}
        \mathbb{E}&\left(\e^{\alpha (t\wedge T_\varepsilon)}|X(t\wedge T_\varepsilon)-Y(t\wedge T_\varepsilon)|^2\right)\\
        &\leq C\|\xi-\eta\|_{\infty}^2+(\alpha+(1+\e^{\alpha\tau})K^{\prime}-2\lambda)\mathbb{E}\int_0^{t\wedge T_\varepsilon}\e^{\alpha s}|X(s)-Y(s)|^2\d s.
    \end{aligned}        
\end{equation*}
For any given $\alpha>0$, choose $\lambda=\lambda_0>0$ such that $\alpha+(1+\e^{\alpha\tau})K^{\prime}-2\lambda\leq 0$. Then, we obtain
\begin{equation}\label{Eq:zhognj}
   \mathbb{E}\left(\e^{\alpha (t\wedge T_\varepsilon)}|X(t\wedge T_\varepsilon)-Y(t\wedge T_\varepsilon)|^2\right)\leq  C\|\xi-\eta\|_{\infty}^2
\end{equation}
for any $t\geq 0$. We next prove that $T_{\varepsilon}\to \infty$ a.s., as $\varepsilon\to 0$. By Chebyshev's inequality, we arrive at
\[
\begin{aligned}
\mathbb{P}(T_\varepsilon \ge t)
&\le \mathbb{P}\left( |X(t\wedge T_\varepsilon) - Y(t\wedge T_\varepsilon)| \ge \varepsilon^{-1} \right) \\
&\le \varepsilon^2\,\mathbb{E}\left(|X(t\wedge T_\varepsilon) - Y(t\wedge T_\varepsilon)|^2\right) \\
&\le \varepsilon^2\,\mathbb{E}\left(\e^{\alpha(t\wedge T_\varepsilon)}|X(t\wedge T_\varepsilon) - Y(t\wedge T_\varepsilon)|^2\right) \\
&\le C\varepsilon^2\|\xi-\eta\|_\infty^2,
\end{aligned}
\]
for any $t\geq 0$. Taking $\varepsilon\to 0$ yields $T_{\varepsilon}\to \infty$ a.s.  Furthermore, applying the monotone convergence theorem, we have
\begin{equation}\label{Eq:zhognjer}
    \mathbb{E}|X(t)-Y(t)|^2\leq C\|\xi-\eta\|_{\infty}^2\e^{-\alpha t},
\end{equation}
for any $t\geq 0$.
It is clear that \eqref{Eq:zhognjer} also holds for $t\in[-\tau,0]$. Hence, \eqref{Eq:zhognjer} is valid for all $t\geq -\tau$.

Now we turn to prove our main result. Similarly, we can define the stopping time $ T_{\varepsilon}^{\prime}:=\inf\{{t\geq 0: \|X_t-Y_t\|_{\infty}\geq \varepsilon^{-1}\}}\vee \tau$. Let $t(\varepsilon)=t\wedge T_{\varepsilon}^{\prime}$. By \eqref{Eq:newel} and $2\lambda\geq K^{\prime}$, we have
\begin{equation*}
    \begin{aligned}
        \begin{aligned}
\mathbb{E}&\|X_{t(\varepsilon)}-Y_{t(\varepsilon)}\|_{\infty}^2\\
&=\mathbb{E}\left(\sup_{t(\varepsilon)-\tau \le s \le t(\varepsilon)} |X(s)-Y(s)|^2\right) \\
&\le \mathbb{E}|X(t(\varepsilon)-\tau)-Y(t(\varepsilon)-\tau)|^2
+K^{\prime}\mathbb{E}\int_{t(\varepsilon)-\tau}^{t(\varepsilon)}\int_{-\tau}^{0}|X(u+\theta)-Y(u+\theta)|^2\mu(\mathrm{d}\theta)\mathrm{d}s \\
&\quad+\mathbb{E}\left(\sup_{t(\varepsilon)-\tau \le s \le t(\varepsilon)}\left(M_1(s)+M_2(s)\right)\right).
\end{aligned}
    \end{aligned}
\end{equation*} 
 Using BDG's inequality and Young's inequality implies
\begin{equation*}
    \begin{aligned}
\mathbb{E}&\left(\sup_{t(\varepsilon)-\tau\le s\le t(\varepsilon)} M_1(s)\right)\\
&\le 8\sqrt{2}\,\mathbb{E}\left(\int_{t(\varepsilon)-\tau}^{t(\varepsilon)} |X(s)-Y(s)|^2\ \|\sigma(X_s)-\sigma(Y_s)\|_{\rm HS}^2\,\mathrm{d}s\right)^{\frac12} \\
&\le \frac14\,\mathbb{E}\left(\sup_{t(\varepsilon)-\tau\le s\le t(\varepsilon)}|X(s)-Y(s)|^2\right)
+C\,\mathbb{E}\left(\int_{t(\varepsilon)-\tau}^{t(\varepsilon)}\|\sigma(X_s)-\sigma(Y_s)\|_{\rm HS}^2\,\mathrm{d}s\right).
\end{aligned}
\end{equation*}
Then, by BDG's inequality \cite[Theorem 48]{protter2004stochastic}, and the definition of stochastic integral w.r.t. the Poisson jump process, we obtain that
\begin{equation*}
{
\begin{aligned}
\mathbb{E}\Big(\sup_{t(\varepsilon)-\tau\le s\le t(\varepsilon)} M_2(s)\Big)&\le C\mathbb{E}\left(\int_{t(\varepsilon)-\tau}^{t(\varepsilon)}\int_{\mathbb{Z}}\left|2\langle X(s-)-Y(s-),\,(\gamma(X_{s-})-\gamma(Y_{s-})) c(z)\rangle
\right.\right.\\
&\qquad\qquad\qquad\left.\left.+\left|\gamma(X_{s-})-\gamma(Y_{s-})\right|^{2}|c(z)|^{2}\right|^{2}N(\mathrm{d}s,\mathrm{d}z)\right)^{\frac12} \\[-3pt]
&= C\mathbb{E}\Big(\sum_{\substack{t(\varepsilon)-\tau\le u\le t(\varepsilon)\\ u\in D_p\\ p(u)\in\mathbb{Z}}}\left|2\langle X(u-)-Y(u-),\,(\gamma(X_{u-})-\gamma(Y_{u-})) c(z)\rangle\right.\\[-3pt]
&\qquad\qquad\qquad\left.+\left|\gamma(X_{u-})-\gamma(Y_{u-})\right|^{2}|c(z)|^{2}\right|^{2}\Big)^{\frac12} \\[-3pt]
&\le C\mathbb{E}\Big(\sum_{\substack{t(\varepsilon)-\tau\le u\le t(\varepsilon)\\ u\in D_p\\ p(u)\in\mathbb{Z}}}\left|\langle X(u-)-Y(u-),\,(\gamma(X_{u-})-\gamma(Y_{u-}))c(z)\rangle\right|\Big) \\[-3pt]
&\quad + C\mathbb{E}\Big(\sum_{\substack{t(\varepsilon)-\tau\le u\le t(\varepsilon)\\ u\in D_p\\ p(u)\in\mathbb{Z}}}\left|\gamma(X_{u-})-\gamma(Y_{u-})\right|^{2}|c(z)|^{2}\Big)\\
&= C\mathbb{E}\Big(\sum_{\substack{t(\varepsilon)-\tau\le u\le t(\varepsilon)\\ u\in D_p\\ p(u)\in\mathbb{Z}}}\left|\langle X(u-)-Y(u-),\,(\gamma(X_{u-})-\gamma(Y_{u-}))c(z)\rangle\right|\Big) \\[-3pt]
&\quad + C\mathbb{E}\left(\int_{t(\varepsilon)-\tau}^{t(\varepsilon)}\int_{\mathbb{Z}}|\gamma(X_{s-})-\gamma(Y_{s-})|^2|c(z)|^2\,\nu(\mathrm{d}z)\mathrm{d}s\right).
\end{aligned}
}
\end{equation*}

For the first term, Young's inequality and H\"older's inequality  imply that
\begin{align*}
C&\,\mathbb{E}\Big(\sum_{\substack{t(\varepsilon)-\tau\le u\le t(\varepsilon)\\ u\in D_p\\ p(u)\in\mathbb{Z}}}\left|\langle X(u-)-Y(u-),\,(\gamma(X_{u-})-\gamma(Y_{u-})) c(z)\rangle\right|\Big) \\
&\le C\,\mathbb{E}\int_{t(\varepsilon)-\tau}^{t(\varepsilon)}\int_{\mathbb{Z}}|X(s-)-Y(s-)|\cdot|\gamma(X_{s-})-\gamma(Y_{s-})|\cdot|c(z)|\,N(\mathrm{d}s,\mathrm{d}z) \\
&= C\,\mathbb{E}\int_{t(\varepsilon)-\tau}^{t(\varepsilon)}\int_{\mathbb{Z}}|X(s-)-Y(s-)|\cdot|\gamma(X_{s-})-\gamma(Y_{s-})|\cdot|c(z)|\,\nu(\mathrm{d}z)\mathrm{d}s \\
&\le C\,\mathbb{E}\left(\sup_{t(\varepsilon)-\tau\le s\le t(\varepsilon)}|X(s-)-Y(s-)|\int_{t-\tau}^{t}\int_{\mathbb{Z}}|\gamma(X_{s-})-\gamma(Y_{s-})|\cdot|c(z)|\,\nu(\mathrm{d}z)\mathrm{d}s\right) \\
&\le \frac14\,\mathbb{E}\left(\sup_{t(\varepsilon)-\tau\le s\le t(\varepsilon)}|X(s)-Y(s)|^2\right) \\
&\quad +C\,\mathbb{E}\left(\int_{t(\varepsilon)-\tau}^{t(\varepsilon)}\int_{\mathbb{Z}}|\gamma(X_{s-})-\gamma(Y_{s-})|^2|c(z)|^2\,\nu(\mathrm{d}z)\mathrm{d}s\right).
\end{align*}
Combining the above calculations yields 
 \begin{align*}
\mathbb{E}&\|X_{t(\varepsilon)}-Y_{t(\varepsilon)}\|_{\infty}^2\\
&\le \mathbb{E}|X(t(\varepsilon)-\tau)-Y(t(\varepsilon)-\tau)|^2
+K^{\prime}\mathbb{E}\int_{t(\varepsilon)-\tau}^{t(\varepsilon)}\int_{-\tau}^{0}|X(u+\theta)-Y(u+\theta)|^2\mu(\mathrm{d}\theta)\mathrm{d}s \\
&\quad+\frac12\,\mathbb{E}\left(\sup_{t(\varepsilon)-\tau\le s\le t(\varepsilon)}|X(s)-Y(s)|^2\right) +C\,\mathbb{E}\left(\int_{t(\varepsilon)-\tau}^{t(\varepsilon)}\|\sigma(X_s)-\sigma(Y_s)\|_{\rm HS}^2\,\mathrm{d}s\right)\\
&\quad +C\,\mathbb{E}\left(\int_{t(\varepsilon)-\tau}^{t(\varepsilon)}\int_{\mathbb{Z}}|\gamma(X_{s-})-\gamma(Y_{s-})|^2|c(z)|^2\,\nu(\mathrm{d}z)\mathrm{d}s\right)\\
&\le \frac12\,\mathbb{E}\left(\sup_{t(\varepsilon)-\tau\le s\le t(\varepsilon)}|X(s)-Y(s)|^2\right)+\mathbb{E}|X(t(\varepsilon)-\tau)-Y(t(\varepsilon)-\tau)|^2
 \\
&\quad +C\mathbb{E}\int_{t(\varepsilon)-\tau}^{t(\varepsilon)}|X(s)-Y(s)|^2\mathrm{d}s+C\mathbb{E}\int_{t(\varepsilon)-\tau}^{t(\varepsilon)}\int_{-\tau}^{0}|X(u+\theta)-Y(u+\theta)|^2\mu(\mathrm{d}\theta)\mathrm{d}s.
\end{align*}
By Fubini's theorem, we get
\begin{equation*}
    \begin{aligned}
\int_{t(\varepsilon)-\tau}^{t(\varepsilon)}\int_{-\tau}^{0} \left|X(s+\theta)-Y(s+\theta)\right|^{2}\mu(\mathrm{d}\theta)\mathrm{d}s
&=\int_{-\tau}^{0}\int_{t(\varepsilon)-\tau}^{t(\varepsilon)} \left|X(s+\theta)-Y(s+\theta)\right|^{2}\mathrm{d}s\,\mu(\mathrm{d}\theta) \\
&=\int_{-\tau}^{0}\int_{t(\varepsilon)-\tau+\theta}^{t(\varepsilon)+\theta} \left|X(u)-Y(u)\right|^{2}\mathrm{d}u\,\mu(\mathrm{d}\theta) \\
&\le \int_{t(\varepsilon)-2\tau}^{t(\varepsilon)} \left|X(s)-Y(s)\right|^{2}\mathrm{d}s.
\end{aligned}
\end{equation*}
 Hence, for any $t\geq \tau$, it follows from \eqref{Eq:zhognjer} that
\begin{equation*}
    \begin{aligned}
    \mathbb{E}\|X_{t(\varepsilon)}-Y_{t(\varepsilon)}\|_{\infty}^2
&\le 2\mathbb{E}|X(t(\varepsilon)-\tau)-Y(t(\varepsilon)-\tau)|^2+C\mathbb{E}\int_{t(\varepsilon)-\tau}^{t(\varepsilon)}|X(s)-Y(s)|^2\mathrm{d}s\\
&\quad +C\mathbb{E}\int_{t(\varepsilon)-2\tau}^{t(\varepsilon)}|X(s)-Y(s)|^2\mathrm{d}s\\
&\leq C\|\xi-\eta\|_{\infty}^2\e^{-\alpha t},
\end{aligned} 
\end{equation*}
where we have used the fact that $t(\varepsilon)-2\tau\geq -\tau$. As in \eqref{Eq:zhognjer}, we have
\begin{equation}\label{Eq:shuijia}
    \mathbb{E}\|X_{t}-Y_{t}\|_{\infty}^2\leq C\|\xi-\eta\|_{\infty}^2\e^{-\alpha t}.
\end{equation}
For $0 \le t \le \tau$, note that
\[
\mathbb{E}\|X_t - Y_t\|_\infty^2
\le \|\xi - \eta\|_\infty^2
+ \mathbb{E}\left(\sup_{0 \le s \le t} |X(s) - Y(s)|^2\right).
\]
Using a similar argument for deriving \eqref{Eq:shuijia}, we deduce that
\[
\mathbb{E}\|X_t - Y_t\|_\infty^2
\le C\|\xi-\eta\|_\infty^2 \e^{-\alpha t},
\quad 0 \le t \le \tau.
\]
Hence, we obtain the desired result. The proof is complete.
\end{proof}

\subsection{Verification of Condition \ref{C1}.1}
	Recall that for a pair of probability measures \( \mu \ll \nu \) over a measurable space \( (X, \mathcal{X}) \), the Kullback-Leibler (KL-) divergence of \( \mu \) from \( \nu \) is defined by
	\[
	D_{\mathrm{KL}}(\mu \| \nu):=\int_{X} \log \frac{\d \mu}{\d \nu} \d \mu=\int_{X} \frac{\d \mu}{\d \nu} \log \left(\frac{\d \mu}{\d \nu}\right) \d \nu.
	\]
	If a measure \( \mu \) is not absolutely continuous with respect to \( \nu \), then, for convenience, we put \( D_{\mathrm{KL}}(\mu \| \nu):=+\infty \). KL-divergence is a stronger measure of the difference between probability distributions than the total variation distance, i.e.,
    \[
    d_{\mathrm{TV}}(\mu, \nu)\leq \sqrt{\frac{1}{2} D_{K L}(\mu \| \nu)}.
    \]
	
	Now consider a \( n \)-dimensional \( (n \in \mathbb{N}) \) It\^o-L\'evy process \( \left(\xi(t)\right)_{t \geq 0} \) with \( \xi_{0}=0 \) and
	\[
	\d \xi(t)=\beta(t) \d t+\d W(t)+\int_{Z}z \widetilde{N}(\d t,\d z), \quad t \geq 0,
	\]
	where \( W(t) \) is a Wiener process in \( \mathbb{R}^{n} \), $N(\d t,\d z)$ is a Poisson random measure, and \( \left(\beta(t)\right)_{t \geq 0} \) is a progressively measurable process. Let \(\mu_{\xi}\) be the law of the process \(\xi\) in \(D([0, \infty);\mathbb{R}^n)\), \(\mu_W\) the law of $W$ in \(C([0, \infty); \mathbb{R}^n)\), and \(\mu_L\) the law of $L(\cdot):=\int_0^{\cdot}\int_{Z}z \widetilde{N}(\d s,\d z)$ in \(D([0, \infty); \mathbb{R}^n)\). Using the Girsanov theorem for It\^o–L\'evy processes (see, e.g., \cite[Theorem 1.31]{Oksendal}) and an argument similar to \cite[Theorem A.2]{Butkovsky2020}, we get the following result.  
\begin{theorem}\label{Thm:mhsh:01}

		\[
		\begin{aligned}
        D_{\mathrm{KL}}&\left(\mu_{\xi} \| \mu_{W}\otimes\mu_{L}\right) \\&\leq \mathbb{E} \left(  \frac{1}{2} \int_{0}^{\infty} \theta_{0}^{2}(t) \mathrm{d} t+\int_{0}^{\infty} \int_{Z}\left[\left(1-\theta_{1}(t, z)\right)\log \left(1-\theta_{1}(t, z)\right)+\theta_{1}(t, z)\right] \nu(\mathrm{d} z) \mathrm{d} t\right), 
		\end{aligned}
		\]
		where  \( \theta_{0}(t)=\theta_{0}(t, \omega) \in \mathbb{R}^{n} \) and \( \theta_{1}(t, z)= \) \( \theta_{1}(t, z, \omega) \in \mathbb{R} \) are predictable processes satisfying
        \begin{equation}\label{Eq:1122t}
            \theta_{0}(t)+\int_{Z}z \theta_{1}(t, z) \nu(\mathrm{d} z)=\beta(t) \text { for a.a. }(t, \omega) \in[0, T] \times \Omega.
        \end{equation}
\end{theorem}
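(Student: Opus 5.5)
We follow the route of \cite[Theorem A.2]{Butkovsky2020}, replacing the Brownian Girsanov theorem by the Girsanov theorem for It\^o--L\'evy processes \cite[Theorem 1.31]{Oksendal}. We first work on a finite horizon $[0,T]$ and then let $T\to\infty$. We also first assume that $\theta_0,\theta_1$ are bounded and that $\theta_1\le 1-\delta$ for some $\delta>0$. Without loss of generality the right-hand side of the claimed bound is finite, since otherwise there is nothing to prove.

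On $[0,T]$, define the Dol\'eans--Dade exponential
\[
Z(t)=\exp\Big(-\int_0^t\theta_0\,\d W-\tfrac12\int_0^t|\theta_0|^2\d s+\int_0^t\!\!\int_Z\log(1-\theta_1)\widetilde N(\d s,\d z)+\int_0^t\!\!\int_Z[\log(1-\theta_1)+\theta_1]\nu(\d z)\d s\Big).
\]
Under the localisation just described, Novikov/Kazamaki-type conditions make $Z$ a true martingale. Set $\d\mathbb Q=Z(T)\d\mathbb P$. By the Girsanov theorem, $\widetilde W(t)=W(t)+\int_0^t\theta_0\d s$ is a $\mathbb Q$-Brownian motion, and $N$ has $\mathbb Q$-compensator $(1-\theta_1)\nu(\d z)\d t$. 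Using \eqref{Eq:1122t},
\[
\xi(t)=\widetilde W(t)+\int_0^t\!\!\int_Z z\,\big(N(\d s,\d z)-\nu(\d z)\d s\big).
\]
Here the jump compensator does not match, so the law under $\mathbb Q$ is not yet $\mu_W\otimes\mu_L$. To fix this, we should take the opposite direction, $\theta_1\mapsto$ the change making the compensator exactly $\nu$. Write the $\mathbb Q$-compensated measure as $\widetilde N^{\mathbb Q}=N-(1-\theta_1)\nu\,\d t$. Then
\[
\int z\,\widetilde N=\int z\,\widetilde N^{\mathbb Q}-\int z\,\theta_1\nu\,\d t,
\]
and this drift, together with $\theta_0$, cancels $\beta$. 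With the correct sign convention, under $\mathbb Q$ the process $\xi$ is a Brownian motion plus an independent compensated Poisson integral with intensity $\nu$. Hence $\mathrm{Law}_{\mathbb Q}(\xi|_{[0,T]})=(\mu_W\otimes\mu_L)|_{[0,T]}$.

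Next, by the data-processing inequality for the map $\omega\mapsto\xi(\omega)$,
\[
D_{\mathrm{KL}}(\mu_\xi^T\|\mu_W^T\otimes\mu_L^T)\le D_{\mathrm{KL}}(\mathbb P\|\mathbb Q)=\E_{\mathbb P}[-\log Z(T)].
\]
Taking $\mathbb P$-expectations of $-\log Z(T)$, the stochastic integrals against $W$ and $\widetilde N$ have zero mean. This requires square integrability, which holds after localisation. What remains is $\E\big(\tfrac12\int_0^T|\theta_0|^2\d t+\int_0^T\int_Z[\cdots]\nu\,\d t\big)$. The jump integrand must be identified carefully: writing $\E_{\mathbb P}$ of $-\log(\d\mathbb Q/\d\mathbb P)$ against $N$ with $\mathbb P$-compensator $\nu$ produces $-\log(1-\theta_1)-\theta_1$. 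To obtain the stated form $(1-\theta_1)\log(1-\theta_1)+\theta_1$, the KL is instead computed as $\E_{\mathbb Q}$-type relative entropy, with intensity ratio $1-\theta_1$ relative to $\nu$. Accordingly, I would set up the density with $\mathbb P$ as the reweighted measure, i.e.\ use the Radon--Nikodym derivative $\d\mathbb P/\d\mathbb Q$ and the formula $D_{\mathrm{KL}}(\mathbb P\|\mathbb Q)=\E_{\mathbb P}\log(\d\mathbb P/\d\mathbb Q)$. There the compensator of $N$ under $\mathbb P$ is $\nu$ and under $\mathbb Q$ it is a modified intensity. Matching signs so that the entropy density is $\ell\log\ell-\ell+1$ with $\ell=1-\theta_1$ gives exactly $(1-\theta_1)\log(1-\theta_1)+\theta_1$.

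Finally, we remove the localisation by a stopping-time truncation $\theta^{(k)}=\theta\1_{[0,\tau_k]}$, using lower semicontinuity of KL under weak convergence. We then let $T\to\infty$, using that KL on $D([0,\infty))$ is the supremum of the finite-horizon KLs, since the $\sigma$-algebras generated by restriction to $[0,T]$ increase to the full Borel $\sigma$-algebra. The main obstacle is the bookkeeping in the middle step: choosing the direction of the measure change and the sign conventions so that the jump term comes out as the stated entropy density, while justifying that the martingale terms have zero mean. I expect the truncation argument to be routine, although the case $\theta_1$ close to $1$ requires care.
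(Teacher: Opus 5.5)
\noindent\textbf{There is a genuine gap, and it sits in the step you describe as bookkeeping.} No choice of sign convention repairs it.

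For $\theta_1\neq0$, the It\^o--L\'evy Girsanov theorem gives only two things: $\widetilde W=W+\int_0^\cdot\theta_0\,\d s$ is a $\mathbb Q$-Brownian motion, and $N$ has $\mathbb Q$-compensator $(1-\theta_1)\nu(\d z)\d t$. So $\xi$ is merely a $\mathbb Q$-local martingale. Since the marks lie in $\R^n_0$, the jump measure of $\xi$ is exactly $N$. Take, e.g., deterministic $\theta_1$ not vanishing $\nu(\d z)\d t$-a.e. Then under $\mathbb Q$ the jumps of $\xi$ have intensity $(1-\theta_1)\nu\neq\nu$, so $\mathrm{Law}_{\mathbb Q}(\xi)$ is not the reference law. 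This holds under either reading of $\mu_W\otimes\mu_L$: the law of $W+L$, or the law of the pair $(W,L)$ as in the verification of Condition \ref{C1}.1.

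Your proposed fix is self-contradictory. A change of measure that keeps the compensator of $N$ equal to $\nu$ leaves $\int_Z z\,\widetilde N(\d t,\d z)$ a martingale. It therefore cannot absorb the part $\int_Z z\,\theta_1\,\nu(\d z)$ of $\beta$, and it removes the drift only if $\theta_0=\beta$.

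The entropy density fails in the same way. Data processing fixes the direction $D_{\mathrm{KL}}(\mathbb P\|\mathbb Q)=\E_{\mathbb P}[-\log Z(T)]$, with your density $Z(T)$. Its jump density is $-\log(1-\theta_1)-\theta_1$, as you computed. The density $(1-\theta_1)\log(1-\theta_1)+\theta_1$ belongs instead to $\E_{\mathbb Q}[\log Z(T)]=D_{\mathrm{KL}}(\mathbb Q\|\mathbb P)$, which gives no bound on $D_{\mathrm{KL}}(\mu_\xi\|\cdot)$. You also cannot exchange $\mathbb P$ and $\mathbb Q$, because $\mathbb P$ is the measure under which $\xi$ has the prescribed dynamics.

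\noindent\textbf{The gap cannot be closed, because the inequality is false once $\theta_1\neq0$ is allowed.} Take $n=1$, $\nu=\delta_1$, $\beta=-\1_{[0,1]}$, $\theta_0\equiv0$ and $\theta_1(t,z)=-\1_{[0,1]}(t)$; these satisfy \eqref{Eq:1122t}.
\begin{enumerate}[(a)]
\item Split a path, which has finitely many jumps on compacts, into its continuous part and its jumps. Under both $\mu_\xi$ and the reference law, the jumps form a rate-one Poisson process independent of a Brownian continuous part. The two continuous parts differ only by the deterministic drift $\int_0^\cdot\beta\,\d s$.
\item Hence, in either reading, the left-hand side equals $\frac12\int_0^\infty|\beta(t)|^2\d t=\frac12$.
\item The right-hand side equals $2\log2-1\approx0.386$. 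The other density would give $1-\log2\approx0.307$, so it fails as well.
\end{enumerate}
More generally, both $(1-x)\log(1-x)+x$ and $-\log(1-x)-x$ are strictly smaller than $x^2/2$ for every $x<0$.

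\noindent\textbf{What your route does prove.} With your localisation and $T\to\infty$ steps, which are fine, it proves the case $\theta_1\equiv0$, $\theta_0=\beta$, namely $D_{\mathrm{KL}}\le\frac12\E\int_0^\infty|\beta(t)|^2\d t$. That is the only case the paper uses. It is also all that the paper's one-line justification via \cite[Theorem 1.31]{Oksendal} and \cite[Theorem A.2]{Butkovsky2020} actually yields, since that theorem only makes $\xi$ a $\mathbb Q$-local martingale. So the statement as written has the same defect as your argument. The application in the verification of Condition \ref{C1}.1 takes $\theta_1=0$ and does not depend on it.
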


 Building on the above results, we now proceed to verify Condition \ref{C1}.1. Let
\[
\d W^{(\lambda)}(t):=\lambda\sigma^{-1}(Y_t)(X(t)-Y(t))\d t+\d W(t).
\]
Then $Y(t)$ satisfies the following equation
\[
\d Y(t)=b(Y_t)\d t+\sigma(Y_t)\d W^{(\lambda)}(t)+\int_{Z}\gamma(Y_{t-})c(z)\widetilde{N}(\d t,\d z).
\]
Recall that for any \( t \geq 0 \), the strong solution \( ( X_t^{\xi})_{t\geq 0} \) to \eqref{EQ:0527:05}  is an image of the driving noise under a measurable mapping
\[
\Phi_{t}^{\xi}: C\left([0, t], \mathbb{R}^{n}\right)\times D([0, t]; \mathbb{R}^n) \rightarrow \mathbb{R}^n.
\]
 In other words, \( X^{\xi}_t=\Phi_{t}^{\xi}(W_{[0, t]}, L_{[0, t]}) \). By the Girsanov theorem (\cite[Theorem 7.4]{liptser2013}), it follows that  \( \operatorname{Law}(W_{[0, t]}^{(\lambda)}) \) is absolutely continuous with respect to \( \operatorname{Law}(W_{[0, t]}) \). Therefore, by the uniqueness of the solution, we have \( Y^{ \eta}_t=\Phi_{t}^{\eta}(W_{[0, t]}^{(\lambda)},L_{[0, t]}) \). Choose $\theta_0(t)=\lambda_0\sigma^{-1}(Y_t)(X(t)-Y(t))$ and $\theta_1(t,z)=0$ in \eqref{Eq:1122t}, where $\lambda_0$ is given in Lemma \ref{eq:X-Y}. Then, by Theorem \ref{Thm:mhsh:01}, Lemma \ref{eq:X-Y}, and Assumption \ref{A3}, we get
		\begin{align*}
			 d_{\mathrm{TV}}\left(\operatorname{Law}(Y_{t}^{ \eta}), P_{t}(\eta, \cdot)\right) 
			&=  d_{\mathrm{TV}}\left(\operatorname{Law}(\Phi_t^{\eta}(W_{[0, t]}^{ (\lambda)},L_{[0, t]})), \operatorname{Law}(\Phi_t^{\eta}(W_{[0, t]}, L_{[0, t]}))\right) \\
			&\leqslant  d_{\mathrm{TV}}\left(\operatorname{Law}(W_{[0, t]}^{(\lambda) },L_{[0, t]}), \operatorname{Law}(W_{[0, t]}, L_{[0, t]})\right) \\
            &\leqslant \sqrt{\frac{1}{2} D_{K L}\left(\operatorname{Law}(W_{[0, t]}^{(\lambda) },L_{[0, t]}) \|\operatorname{Law}(W_{[0, t]}, L_{[0, t]})\right)}\\
			&\leqslant  \frac{\lambda_0}{2} \sqrt{\mathbb E\left(\int_0^t\|\sigma^{-1}(Y_s)\|_{\mathrm{HS}}^2|X(s)-Y(s)|^2\d s\right)}\\
            &\leq \frac{\lambda_0 K\sqrt{C}}{2\sqrt{\alpha}}\|\xi-\eta\|_{\infty},
		\end{align*}
    which implies that Condition \ref{C1}.1 holds with $C(t)=\lambda_0 K\sqrt{C}/(2\sqrt{\alpha})$.
	
\section{Verification of Condition \ref{C2}}\label{SEC:04}
We now proceed to verify Condition \ref{C2}. 
%Let \(C((-\infty, 0]; \mathbb{R}^n)\) be all continuous functions from \((-\infty, 0]\) to \(\mathbb{R}^n\). Define \(\mathscr{C}_r\) analogously to \(\mathscr{D}\).  
In what follows, we derive a support theorem for the continuous stochastic process \(\widetilde{X}(t)\), governed by 
\begin{equation}\label{eq:aux1}
    \d \widetilde{X}(t)=b(\widetilde{X}_{t})\d t+\sigma(\widetilde{X}_{t})\d W(t)-\int_{Z}\gamma(\widetilde{X}_{t})c(z)\nu(\d z)\d t,\quad X_0=\xi.
\end{equation}
Then, we can obtain the following result using the arguments analogous to those in Lemma 3.8 of \cite{Hairer2011PTRF}.
\begin{lemma}\label{lem:widetildeX}
Suppose that Assumptions \ref{A1} and \ref{A3} hold. Let \(B_{c}:=\{\phi\in\mathscr{D}\mid {\|\phi\|_{\infty}}\leq c\}\) for some constant $c>0$. Then for any $R, \delta>0$, there exists a time threshold $t_{R,\delta}>0$ such that
    \begin{equation}
        \inf_{\xi\in {B}_R}\P(\widetilde{X}^{\xi}_{t}\in B_{\delta})>0, \quad t\geq t_{R, \delta}.
    \end{equation}
\end{lemma}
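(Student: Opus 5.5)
The argument will be a control argument in the spirit of \cite[Lemma 3.8]{Hairer2011PTRF}: steer $\widetilde{X}$ towards the origin by a Girsanov shift of the noise, and then use the nondegeneracy from Assumption \ref{A3} to make the probability of staying close to the controlled path uniformly positive over initial data in $B_R$.

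The drift of \eqref{eq:aux1} is
\[
\widetilde b(\phi):=b(\phi)-\gamma(\phi)\int_Z c(z)\nu(\d z).
\]
It is continuous and bounded on bounded sets, because $\nu$ is finite and $c\in L^1(\nu)$.

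\textbf{Step 1 (controlled dynamics).} Fix $\xi\in B_R$ and a large constant $\kappa>0$. Consider the controlled equation
\[
\d Z(t)=\widetilde b(Z_t)\d t+\sigma(Z_t)\d W(t)-\kappa Z(t)\d t,\qquad Z_0=\xi,
\]
obtained by replacing $W$ with $W^{(\kappa)}$, where $\d W^{(\kappa)}=\d W-\kappa\sigma^{-1}(Z_t)Z(t)\d t$. The estimate in Lemma \ref{eq:X-Y} compares $X$ with the solution started from $\eta$; here we want the analogous dissipation estimate with $Y\equiv 0$, which is not itself a solution. So we redo the Itô computation for $|Z(t)|^2$. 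Assumption \ref{A1}, applied with $\psi=0$, gives
\[
2\langle\phi(0),b(\phi)\rangle+\|\sigma(\phi)\|_{\rm HS}^2\le C\bigl(1+|\phi(0)|^2+\textstyle\int|\phi|\d\mu\bigr).
\]
The $\gamma$-term is bounded the same way. Choosing $\kappa$ large, the Fubini/delay argument of Lemma \ref{eq:X-Y} then yields
\[
\E\|Z_t\|_\infty^2\le C\bigl(R^2e^{-\alpha t}+\kappa^{-1}\bigr).
\]
The additive constant comes from $b(0)$ and $\sigma(0)$. Hence, for $\kappa$ large and $t\ge t_{R,\delta}$, Chebyshev gives $\P(Z^\xi_t\in B_{\delta/2})\ge 1/2$ uniformly in $\xi\in B_R$. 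To get $B_\delta$ exactly, one also has to shrink the noise contribution, since the constant from $\sigma(0)$ does not vanish. For this, prefer the deterministic skeleton: take $\sigma$-free controls $u$ with $\dot z=\widetilde b(z_t)+\sigma(z_t)u(t)$ driving $z$ to $0$ on $[0,t_0]$, which is possible since $\sigma$ is invertible (take $u=\sigma^{-1}(z_t)(-\widetilde b(z_t)-\kappa z(t))$). Then invoke a small-ball estimate: $\P(\sup_{s\le t}|\widetilde X(s)-z(s)|<\delta)>0$. This follows from Girsanov with drift $u$ together with positivity of $\P(\sup_{s\le t}|\int\sigma\,\d W|<\epsilon)$ under the bounded $\sigma,\sigma^{-1}$ and local Lipschitz-type control of Assumption \ref{A1}.

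\textbf{Step 2 (Girsanov transfer, uniformity).} The laws of $\widetilde X^\xi$ and $Z^\xi$ on $[0,t]$ are mutually absolutely continuous. The Radon--Nikodym density has exponent $\int_0^t|u(s)|^2\d s$, which is bounded uniformly over $\xi\in B_R$ on the event where the path stays in a bounded set, since $\widetilde b$ is bounded on bounded sets and $\|\sigma^{-1}\|\le K$. By Cauchy--Schwarz, a set of $Z$-probability at least $1/2$ (intersected with a high-probability boundedness event) has $\widetilde X$-probability at least a constant $c_{R,\delta}>0$. To extend from one time to all $t\ge t_{R,\delta}$: after reaching $B_\delta$, continue the control $u=\sigma^{-1}(-\widetilde b-\kappa z)$, which keeps the skeleton inside $B_{\delta/2}$. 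On any fixed finite horizon the same argument applies, with constants depending on $t$; positivity, not uniformity in $t$, is all that is claimed.

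\textbf{Main obstacle.} The key difficulty is the uniform lower bound over $\xi\in B_R$. The coefficients are only continuous in the Skorohod topology and satisfy the one-sided, delayed bound of Assumption \ref{A1}, not a global Lipschitz bound. The step I would attack first is a tube estimate: bound $\sup_{s\le t}|\widetilde X(s)-z(s)|$ via Itô on $|\widetilde X-z|^2$, with the same Fubini handling of $\int|\cdot|\d\mu$ as in Lemma \ref{eq:X-Y}, on the event that the martingale $\int\sigma\d W$ is uniformly small. If this fails without Lipschitz $\sigma$, the fallback is tightness: the family $\{\widetilde X^\xi:\xi\in B_R\}$ is tight on $[0,t]$, and $\xi\mapsto\P(\widetilde X^\xi_t\in B_\delta)$ is lower semicontinuous. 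Combined with positivity for each $\xi$, this gives a positive infimum.
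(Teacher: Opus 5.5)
The overall strategy is the right one: steer along a skeleton, then pay for it with a Girsanov density that is bounded on a bounded event, uniformly in $\xi\in B_R$. Since the paper only points to \cite[Lemma 3.8]{Hairer2011PTRF}, your sketch has to stand on its own. It does not close at the one step that carries the lemma: a lower bound, uniform in $\xi\in B_R$, for the probability that the whole window $\sup_{t-\tau\le s\le t}|\widetilde X(s)|$ is below $\delta$.

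\textbf{Step 1 does not give this bound.} In Lemma \ref{eq:X-Y} every term is proportional to $|X-Y|^2$, because the coefficient differences vanish on the diagonal. Here $\sigma(Z_t)$ does not vanish at $0$. So in the window estimate, BDG and the It\^o correction leave a term $C\,\E\int_{t-\tau}^t\|\sigma(Z_s)\|_{\rm HS}^2\,\d s$. By \ref{A3} this is bounded below by a positive multiple of $\tau$ and does not improve as $\kappa\to\infty$. You therefore get a pointwise bound on $\E|Z(t)|^2$, but not $\E\|Z_t\|_\infty^2\le C(R^2\e^{-\alpha t}+\kappa^{-1})$. The Chebyshev step and the ``set of $Z$-probability at least $1/2$'' used in Step 2 are unsupported.

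\textbf{The tube estimate fails for the same reason.} It\^o's formula for $|\widetilde X-z|^2$ produces $\int_0^t\|\sigma(\widetilde X_s)\|_{\rm HS}^2\,\d s$ and the martingale $\int\langle\widetilde X-z,\sigma(\widetilde X_s)\,\d W(s)\rangle$. Neither is small on the event that $\sup_s|\int_0^s\sigma\,\d W|$ is small.

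\textbf{The fallback is not valid.} Lower semicontinuity plus pointwise positivity gives a positive infimum only over a compact set, and $B_R$ is not compact in $\mathscr D$ for either the uniform or the Skorohod topology. Tightness of the solution laws does not make the set of initial data compact. Also, for the closed ball $B_\delta$ the portmanteau theorem gives upper, not lower, semicontinuity. A repair would need a Markov step at time $\tau$ into a compact set of equicontinuous segments, which you do not have.

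\textbf{The missing idea is to make the comparison with the skeleton exact rather than estimated.}
\begin{enumerate}
\item Fix $t>\tau$ and let $z(s)=\xi(0)\bigl(1-s/(t-\tau)\bigr)^+$.
\item Use the feedback shift $u(s)=\sigma^{-1}(\widetilde X_s)\bigl(\dot z(s)-\widetilde b(\widetilde X_s)\bigr)$, set to $0$ after $\widetilde X$ first leaves the ball of radius $R+1$. Then $|u|\le C_{R,t}$, because $\widetilde b$ is bounded on bounded sets and $\|\sigma^{-1}\|\le K$.
\item Under the Girsanov measure $Q$ with $\d\widetilde W=\d W-u\,\d s$, up to that exit time one has exactly $\widetilde X(s)-z(s)=M(s):=\int_0^s\sigma(\widetilde X_r)\,\d\widetilde W(r)$.
\item Hence on $\{\sup_{s\le t}|M(s)|<\delta\wedge 1\}$ the cutoff is never active, and since $z\equiv 0$ on $[t-\tau,t]$, $\|\widetilde X_t\|_\infty<\delta$.
\item By \ref{A3}, $\d\langle M\rangle/\d s=\sigma\sigma^{T}$ lies between $K^{-2}I$ and $K^{2}I$. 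The standard small-ball lemma for such uniformly elliptic martingales gives $Q(\sup_{s\le t}|M(s)|<\delta\wedge1)\ge c(t,\delta,K,n)>0$. The lower bound in \ref{A3} is essential here, not just boundedness of $\sigma$.
\item Cauchy--Schwarz with $\E_{\P}[(\d Q/\d\P)^2]\le \e^{C_{R,t}^2t}$ gives $\P(A)\ge Q(A)^2\e^{-C_{R,t}^2t}$.
\end{enumerate}
Everything is uniform in $\xi\in B_R$ and works for every $t>\tau$, with no tube estimate needed.
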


Based on the above result, we can easily get the corresponding support theorem for the stochastic process ${X}^{\xi}_{t}$.
\begin{lemma}\label{Lem:0809}
    Suppose that Assumptions \ref{A1} and \ref{A3} hold. For any $R, \delta>0$, we have
    \begin{equation}
        \inf_{\xi\in  {B}_R}\P({X}^{\xi}_{t}\in B_{\delta})>0, \quad t\geq t_{R, \delta},
    \end{equation}
    where $t_{R, \delta}$ and $ {B}_{R}$ are defined in Lemma \ref{lem:widetildeX}.
\end{lemma}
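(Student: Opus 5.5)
The plan is to compare $X^\xi$ with the auxiliary process $\widetilde{X}^\xi$ of \eqref{eq:aux1} on the event that the Poisson random measure has no jumps on $[0,t]$. Since $\nu$ is a finite measure, $N([0,t]\times Z)$ is Poisson distributed with mean $t\nu(Z)$. Hence
\[
\P\big(N([0,t]\times Z)=0\big)=\e^{-t\nu(Z)}>0.
\]

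On the event $\Omega_t:=\{N([0,t]\times Z)=0\}$, the compensated integral reduces to its compensator:
\[
\int_0^{s}\int_Z\gamma(X_{r-})c(z)\widetilde N(\d r,\d z)=-\int_0^{s}\int_Z\gamma(X_{r-})c(z)\nu(\d z)\d r,\qquad s\le t.
\]
Consequently, on $\Omega_t$, $X^\xi$ solves on $[0,t]$ the same equation \eqref{eq:aux1} as $\widetilde X^\xi$, driven by the same $W$. The paths of $X$ are continuous on $[0,t]$ there, so $X_{r-}=X_r$ for $r>0$, up to the initial segment where both are equal to $\xi$. I would then invoke pathwise uniqueness for \eqref{eq:aux1}, which follows from Assumption \ref{A1} exactly as in Lemma \ref{eq:X-Y} with $\lambda=0$ and no jump term. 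To make this rigorous, I would not condition on $\Omega_t$, since that would change the law of $W$. Instead I would argue directly with indicators. The difference $X^\xi(s)-\widetilde X^\xi(s)$ stopped at the first jump time $\tau_1$ satisfies a jump-free equation up to $\tau_1$. Applying Itô's formula together with Assumption \ref{A1} and Gronwall gives $X^\xi=\widetilde X^\xi$ on $[0,\tau_1)$, and $\Omega_t=\{\tau_1>t\}$. This yields
\[
\{X^\xi_t\in B_\delta\}\supset\{\widetilde X^\xi_t\in B_\delta\}\cap\{\tau_1>t\}.
\]

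The main point, and what I expect to be the main obstacle, is the lower bound for the probability of this intersection. Here I use that $\tau_1$ is a functional of $N$ only, while $\widetilde X^\xi$ is a functional of $W$ only; this holds because \eqref{eq:aux1} involves no Poisson noise and has a unique strong solution. Since $W$ and $N$ are independent,
\[
\P(X^\xi_t\in B_\delta)\ge \P(\widetilde X^\xi_t\in B_\delta)\,\e^{-t\nu(Z)}.
\]
Taking the infimum over $\xi\in B_R$ and applying Lemma \ref{lem:widetildeX} gives a positive bound for every $t\ge t_{R,\delta}$. Note that $\e^{-t\nu(Z)}$ does not depend on $\xi$, so the infimum stays positive.

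One detail needs checking. The coefficient $\gamma(\widetilde X_t)$ in \eqref{eq:aux1} must agree with $\gamma(X_{t-})$ on $\Omega_t$. This holds because, on $\Omega_t$, $X_{t-}$ and $X_t$ differ at most through the discontinuities of $\xi$ inside the segment. Those discontinuities occur only at countably many times $r\in[0,\tau]$, so they do not affect the $\d r$ integral.
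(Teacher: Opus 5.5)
Your proposal is correct and follows the paper's proof: on the event of no Poisson jumps in $[0,t]$ you identify $X^\xi$ with $\widetilde X^\xi$, you use independence of $W$ and $N$ to get $\P(X^\xi_t\in B_\delta)\ge \e^{-\nu(Z)t}\,\P(\widetilde X^\xi_t\in B_\delta)$, and you conclude with Lemma \ref{lem:widetildeX}. Two of your choices make rigorous what the paper only asserts as an identity of conditional laws: you use the first jump time of $N$ instead of the paper's $\varsigma$, and you argue by pathwise uniqueness plus independence. The paper's $\varsigma$ is the first jump of $X$; its law need not be exponential with rate $\nu(Z)$, and it is even $0$ when $\xi$ jumps at $0$.

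The one loose point is your last paragraph. If $\xi$ jumps at $s_0\in(-\tau,0)$, the segments $X_{r-}$ and $X_r$ differ as functions for every $r\in[0,s_0+\tau)$, not just for countably many $r$. So the countability argument only works for special $\gamma$, and it is cleaner to take the compensator drift in \eqref{eq:aux1} as $\gamma(\widetilde X_{t-})$, a point the paper also glosses over.
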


\begin{proof}
    Define a stopping time $\varsigma:=\inf\{t\geq0: X(t)\neq X(t-)\}$. Note that the conditional distribution of ${X}^{\xi}_{t}$ on the set $\{\varsigma>t\}$ is identical with the distribution of $\widetilde{X}^{\xi}_{t}$.   
    % Note that ${X}^{\xi}_{t}$ and $\widetilde{X}^{\xi}_{t}$ have the same distribution on the set $\{\tau_1>t\}$. 
    Hence, for any $\xi\in {B}_R$ and $t\geq t_{R, \delta}$, we have
    \begin{equation*}
        \begin{split}
            \P({X}^{\xi}_{t}\in B_{\delta})
            &\geq \P({X}^{\xi}_{t}\in B_{\delta}, \varsigma>t)\\
            &=\P({X}^{\xi}_{t}\in B_{\delta}|\varsigma>t)\P(\varsigma>t)\\
            &= \P({X}^{\xi}_{t}\in B_{\delta}|\varsigma>t)\e^{-\nu(Z)t}\\
            &=\P(\widetilde{X}^{\xi}_{t}\in  {B}_{\delta})\e^{-\nu(Z)t}.
        \end{split}
    \end{equation*}
 Taking the infimum over $\xi\in {B}_R$ on both sides of the above inequality, we derive our desired assertion by Lemma \ref{lem:widetildeX}.
\end{proof}
%There is the position to give\\
\noindent\textbf{Verification of Condition \ref{C2}}: 
\begin{enumerate}[(1)]
    \item  If \( \lim _{\|\phi\|_{\infty} \rightarrow \infty} V(\phi)=+\infty \), there exists a positive constant $\widetilde{M}$ such that $B=\{V\leq M\}=\{\phi\in\mathscr{D}\mid {\|\phi\|_{\infty}}\leq \widetilde{M}\}$. Given \( \varepsilon>0 \), we take \( D=B_{\varepsilon/2}\). Then, clearly, Condition \ref{C2}.2 holds. Taking $R=\widetilde{M}$ and $\delta=\varepsilon/2$ in Lemma \ref{Lem:0809} implies Condition \ref{C2}.1. 
    \item   If \( \lim _{|\phi(0)| \rightarrow \infty} V(\phi)=+\infty \), there exists a positive constant $\widehat{M}$ such that $B=\{V\leq M\}=\{\phi\in\mathscr{D}\mid |\phi(0)|\leq \widehat{M}\}=:\widehat{B}$. Clearly, Condition \ref{C2}.2 holds with \( D=B_{\varepsilon/2}\). We only need to verify that Condition \ref{C2}.1 holds for $\widehat{B}$.
By It\^o's formula, we obtain
    \begin{align*}
        |X(t)|^2&=|\xi(0)|^2+\int_0^t\left(2\langle X(s), b(X_s)\rangle+\|\sigma(X_s)\|^2_{\mathrm{HS}}+\int_Z |\gamma(X_{s-})c(z)|^2\nu(\d z)\right)\d s\\
        & \quad+\int_0^t 2\langle X(s), \sigma(X_s)\d W(s)\rangle\\
        &\quad+\int_0^t \int_Z\left(|X(s-)+\gamma(X_{s-})c(z)|^2-|X(s-)|^2\right)\widetilde{N}(\d s, \d z).
    \end{align*}
For any $\delta>0$, let $T_{\delta}:=\inf\{t\geq0:|X(t)|\geq \delta^{-1}\}$. By \cite[Theorem 1]{Novikov1975} and Assumption \ref{A1}, we have for any $t\in[0,\tau]$,
\begin{align*}
    \mathbb{E}&\left(\sup_{0\leq s\leq t\wedge  T_{\delta}}|X(s)|^2\right)\\
    &\leq |\xi(0)|^2+\mathbb E\int_0^{t\wedge T_{\delta}}\left(2\langle X(s), b(X_s)\rangle_++\|\sigma(X_s)\|^2_{\mathrm{HS}}+\int_Z |\gamma(X_{s-})c(z)|^2\nu(\d z)\right)\d s\\
    &\quad +C\mathbb E\left(\int_0^{t\wedge  T_{\delta}} |X(s)|^2\|\sigma(X_s)\|^2_{\mathrm{HS}}\d s\right)^{\frac{1}{2}}+C\nu(Z)\E\bigg(\int_0^{t\wedge  T_{\delta}}|X(s)|^2\d s\bigg)\\
    &\quad +C\E\bigg(\int_0^{t\wedge   T_{\delta}}\int_{Z}|\gamma(X_{s-})c(z)|^2\nu(\d z)\d s\bigg)\\
    &\leq |\xi(0)|^2+ C\mathbb E\int_0^{t\wedge T_{\delta}}\left(1+|X(s)|^2\right)\d s +\frac{1}{2}\mathbb E\left(\sup_{0\leq s\leq t\wedge T_{\delta}}|X(s)|^2\right)\\
    &\leq C+|\xi(0)|^2+\frac{1}{2}\mathbb E\left(\sup_{0\leq s\leq t\wedge  T_{\delta}}|X(s)|^2\right)+C\int_0^{t}\mathbb E\left(\sup_{0\leq u\leq s\wedge T_{\delta}}|X(u)|^2\right) \d s,
\end{align*}
  which implies that
   \[
   \mathbb E\left(\sup_{0\leq s\leq t\wedge T_{\delta}}|X(s)|^2\right)\leq C(1+|\xi(0)|^2)+C\int_0^{t}\mathbb E\left(\sup_{0\leq u\leq s\wedge T_{\delta}}|X(u)|^2\right) \d s.
   \]
Therefore, using Gr\"onwall's inequality, one can obtain
\[
\mathbb E\left(\sup_{0\leq s\leq \tau\wedge T_{\delta}}|X(s)|^2\right)\leq C(1+|\xi(0)|^2)\e^{C\tau}.
\]
Letting $\delta\to 0$, by means of Fatou's lemma, we finally derive
\[
\mathbb E\left(\sup_{0\leq s\leq \tau}|X(s)|^2\right)\leq C(1+|\xi(0)|^2)\e^{C\tau}.
\]
Note that 
    $\|X_\tau\|_{\infty}=\sup_{-\tau\leq \theta\leq0} |X(\tau+\theta)|=\sup_{0\leq u\leq \tau}|X(u)|$.
Thus, by Chebyshev's inequality, there exists a  constant $L=L(\widehat{M})>0$ large enough such that
\[
\inf_{\xi\in \widehat{B}}\mathbb P(\|X^\xi_\tau\|_{\infty}\leq L)>\frac{1}{2}.
\]
Taking $R=L$ and $\delta=\varepsilon/2$ in Lemma \ref{Lem:0809} and choosing $t_0\geq \tau+t_{L,\varepsilon/2}$, this implies
\begin{align*}
\inf_{\xi\in \widehat{B}}\mathbb P\big(\|X^\xi_{t_0}\|_{\infty}\leq\frac{\varepsilon}{2}\big)
&\geq \inf_{\xi\in \widehat{B}}\mathbb P\big(\|X^\xi_{t_0}\|_{\infty}\leq\frac{\varepsilon}{2}\mid \|X^\xi_\tau\|\leq L\big)\inf_{\xi\in \widehat{B}}\mathbb P(\|X^\xi_\tau\|\leq L)\\
&> \frac{1}{2}\inf_{\xi:\|\xi\|_\infty\leq L}\mathbb P\big(\|X^\xi_{t_0-\tau}\|_{\infty}\leq \frac{\varepsilon}{2}\big)>0.
\end{align*}
 The proof is complete.
\end{enumerate}

\bibliographystyle{plain}
%使得没有引用的参考文献能被一并显示
%\nocite{*}

\bibliography{Ergodicity_SFDEwJ}
															
\end{document}